\newtheorem{theorem}{Theorem}[section]
\newtheorem{lemma}{Lemma}[section]
\numberwithin{equation}{section}
\theoremstyle{definition}
\theoremstyle{remark}
\begin{document}
\title{Extensions of Copson's Inequalities}
\author{Peng Gao}
\address{Division of Mathematical Sciences, School of Physical and Mathematical Sciences,
Nanyang Technological University, 637371 Singapore}
\email{penggao@ntu.edu.sg}
\subjclass[2000]{Primary 26D15} \keywords{Copson's inequalities}


\begin{abstract}
  We extend the classical Copson's inequalities so that the values
  of parameters involved go beyond what is currently known.
\end{abstract}

\maketitle
\section{Introduction}
\label{sec 1} \setcounter{equation}{0}

  Let $p>0$ and ${\bf x}=(x_n)_{n \geq 1}$ be a non-negative sequence.
  Let $(\lambda_n)_{n \geq 1}$ be a non-negative sequence with
  $\lambda_1>0$ and let $\Lambda_n=\sum^n_{i=1}\lambda_i$.
   The classical Copson's inequalities are referred as the
   following ones \cite[Theorem 1.1, 2.1]{C}:
\begin{align}
\label{1.1}
  & \sum^{\infty}_{n=1}\lambda_n\Lambda^{-c}_n\left ( \sum^{n}_{k=1}\lambda_k x_k \right
   )^p \leq \left ( \frac {p}{c-1}\right )^p
   \sum^{\infty}_{n=1}\lambda_n\Lambda^{p-c}_nx^p_n, \ 1<c \leq p
   ; \\
  \label{1.2}
  & \sum^{\infty}_{n=1}\lambda_n\Lambda^{-c}_n\left ( \sum^{\infty}_{k=n}\lambda_k x_k \right
   )^p \leq \left ( \frac {p}{1-c}\right )^p
   \sum^{\infty}_{n=1}\lambda_n\Lambda^{p-c}_nx^p_n, \ 0 \leq c < 1.
\end{align}
   When $\lambda_k=1$ for all $k$ and $c=p$, inequality \eqref{1.1} becomes the celebrated
   Hardy's inequality (\cite[Theorem 326]{HLP}). We note that the
   reversed inequality of \eqref{1.2} holds when $c \leq 0 <p <1$
   and the constants are best possible in all these cases.

   It is easy to show that inequalities \eqref{1.1} and \eqref{1.2} are equivalent to each other by the duality principle \cite[Lemma 2]{M}
    for the norms of linear operators. It's observed by Bennett
    \cite[p. 411]{B1} that
   inequality \eqref{1.1} continues to hold for $c>p$ with
   constant $(p/(p-1))^p$. A natural question to ask now is
   whether inequality \eqref{1.1} itself continues to hold for $c>p$. Note
   that in this case the constant $(p/(c-1))^p$ is best possible
   by considering the case $\lambda_n=1$,
   $x_n=n^{(c-p-1-\epsilon)/p}$ with $\epsilon
   \rightarrow 0^+$.

   As analogues to Copson's inequalities, the following
   inequalities are due to Leindler \cite[(1)]{L}:
\begin{align}
\label{1.3}
 & \sum^{\infty}_{n=1}\lambda_n{\Lambda^*_n}^{-c}\left (
\sum^{n}_{k=1}\lambda_k x_k \right
   )^p \leq \left ( \frac {p}{1-c}\right )^p
   \sum^{\infty}_{n=1}\lambda_n{\Lambda^*_n}^{p-c}x^p_n, \ 0 \leq c
   <1
   ; \\
\label{1.4}
  & \sum^{\infty}_{n=1}\lambda_n{\Lambda^*_n}^{-c}\left ( \sum^{\infty}_{k=n}\lambda_k x_k \right
   )^p \leq \left ( \frac {p}{c-1}\right )^p
   \sum^{\infty}_{n=1}\lambda_n{\Lambda^*_n}^{p-c}x^p_n, \ 1 < c \leq
   p,
\end{align}
   where we assume $\sum^{\infty}_{n=1}\lambda_n<\infty$ and we set
   $\Lambda^*_n=\sum^{\infty}_{k=n}\lambda_k$. We point out here
   that Leindler's result corresponds to
   case $c=0$ of inequality \eqref{1.3}, after a change of
   variables. Inequalities \eqref{1.3} and \eqref{1.4} are given in
   \cite[Corollary 5, 6, p. 412]{B1}. Again it is easy to see that
   inequalities \eqref{1.3} and \eqref{1.4} are equivalent to each other by the duality
   principle. Moreover, the constants are best possible.

   As an application of Copson's inequalities, we note the
   following result of Bennett and Grosse-Erdmann \cite[Theorem 8]{BGE1} that asserts for $p \geq 1, \alpha \geq 1$,
\begin{align}
\label{1.5}
  \sum^{\infty}_{n=1}\lambda_n\left (\sum^{\infty}_{k=n}\Lambda^{\alpha }_{k}x_k \right )^n \leq (\alpha
  p+1)^p\sum^{\infty}_{n=1}\lambda_n\Lambda^{\alpha p}_{n}\left
  (\sum^{\infty}_{k=n}x_k \right )^p.
\end{align}
  Here the constant is best possible. They also conjectured \cite[p.
   579]{BGE1} that inequality \eqref{1.5} (resp. its reverse) remains valid with the same best possible
   constant when $p \geq 1, 0 < a < 1$ (resp. $ -1/p < a < 0$).
   Weaker constants are given for these cases in \cite[Theorem 9,
   10]{BGE1}.

   It is our goal in this paper to show in the next section that the method developed
   in \cite{G6}-\cite{G9} can be applied to extend Copson's
   inequality \eqref{1.1} to some $c>p$ (or equivalently, by the duality principle, to extend Copson's
   inequality \eqref{1.2} to some $c<0$). In Section \ref{sec 3}, we
   extend inequality \eqref{1.5} to some $0<\alpha<1$.

\section{Main Result}
\label{sec 2} \setcounter{equation}{0}

   Before we prove our main result, we need a lemma first.
\begin{lemma}
\label{lem1}
   Let $p>0$ be fixed. In order for the following inequality (resp. its
   reverse)
\begin{align}
\label{2.7}
   \frac {1-c}{p}x  \leq \left (1+\frac {1-c}{p}x \right
   )^{1-p}-(1-x)^{1-c}
\end{align}
   to be valid when $c<0, p>1$ (resp. when $0<c<1, 0<p<1$) for all $0 \leq x \leq 1$, it
   suffices that it is valid when $x=1$.
\end{lemma}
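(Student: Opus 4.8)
The plan is to rescale and reduce the claim to a shape analysis of a single function on $[0,1]$. Writing $a=(1-c)/p>0$, set
\[
F(x)=(1+ax)^{1-p}-(1-x)^{1-c}-ax ,
\]
so that the asserted inequality (resp.\ its reverse) says $F\ge 0$ (resp.\ $F\le 0$) on $[0,1]$, while the hypothesis says $F(1)\ge 0$ (resp.\ $F(1)\le 0$). First I would record, using $pa=1-c$, that $F(0)=0$, $F'(0)=0$, and $F''(0)=(1-c)(p+c-1)/p$. Since $(1-1)^{1-c}=0$ in both regimes, $F(1)=(1+a)^{1-p}-a$; as $p>1$ (resp.\ $0<p<1$) this quantity is negative once $a\ge 1$ (resp.\ positive once $a\le 1$), so the hypothesis forces $a<1$ (resp.\ $a>1$), i.e.\ $p+c-1>0$ (resp.\ $<0$), whence $F''(0)>0$ (resp.\ $F''(0)<0$); thus $F$ already lies on the correct side of $0$ near $x=0$. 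I would also note the right-endpoint behaviour of $F'$: in the first regime $(1-x)^{-c}\to 0$, so $F'(1)=(1-p)a(1+a)^{-p}-a<0$, whereas in the second $(1-x)^{-c}\to+\infty$ with $1-c>0$, so $F'(x)\to+\infty$ and $F''(x)\to+\infty$ as $x\to 1^-$.

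The crux is to bound the number of sign changes of $F''$ on $(0,1)$. From
\[
F''(x)=p(p-1)a^{2}(1+ax)^{-p-1}+c(1-c)(1-x)^{-c-1},
\]
and because $c(1-c)$ keeps a fixed sign in each regime, $F''(x)=0$ is equivalent to $\rho(x):=(1+ax)^{-p-1}(1-x)^{c+1}$ equalling a fixed positive constant, and $\operatorname{sign}F''(x)$ is a fixed sign times $\operatorname{sign}\bigl(\rho(x)-\mathrm{const}\bigr)$. Now $(\log\rho)'(x)=N(x)/\bigl((1+ax)(1-x)\bigr)$, where $N$ is affine in $x$ with coefficient of $x$ equal to $a(p-c)$; hence $N$, and with it $(\log\rho)'$, changes sign at most once, so $\rho$ is monotone or unimodal on $(0,1)$. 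Therefore $\rho(x)=\mathrm{const}$ has at most two solutions, and $F''$ changes sign at most twice on $(0,1)$.

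It then remains to assemble these facts. In the reverse regime $F''(0)<0$ while $F''(x)\to+\infty$ at $1$, so $F''$ changes sign exactly once, from $-$ to $+$; together with $F'(0)=0$ and $F'(1^-)=+\infty$ this makes $F'$ negative on an initial interval and positive afterwards, so $F$ is valley-shaped with $F(0)=0$, hence $\max_{[0,1]}F=\max(0,F(1))=0$ and $F\le 0$. In the first regime $F''(0)>0$, so the sign pattern of $F''$ is $+$, or $+-$, or $+-+$; the first makes $F$ convex with $F(0)=F'(0)=0$, hence $F\ge 0$, while in the other two a short case check — using $F'(0)=0$, $F'(1)<0$, and, for the pattern $+-+$, that the final increasing stretch of $F'$ still terminates at $F'(1)<0$ — shows $F'$ is positive on an initial interval and negative afterwards, so $F$ is hump-shaped with $F(0)=0$, hence $\min_{[0,1]}F=\min(0,F(1))=0$ and $F\ge 0$. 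I expect the main obstacle to be precisely this last bookkeeping: a priori $F''$ can change sign twice, so $F$ is neither convex nor concave, and one must lean on the special data $F(0)=F'(0)=0$, the sign of $F''(0)$ dictated by the endpoint hypothesis, and the sign of $F'$ at $x=1$ in order to still corner $F$ into a single monotone hump.
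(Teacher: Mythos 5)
Your proof is correct, and it follows the same basic strategy as the paper: a shape analysis of the very same auxiliary function ($f_{p,c}=F$) via its second derivative, combined with the boundary data $F(0)=F'(0)=0$ and the sign of $F'$ at the right endpoint. The execution, however, differs in two substantive ways, both of which streamline the argument. First, the paper bounds the number of roots of $f''_{p,c}$ by introducing an auxiliary function $g_{p,c}$ and splitting into the cases $-1<c<0$, $c=-1$ and $c<-1$ (using $f'''_{p,c}<0$ in the first and the monotonicity of $g'_{p,c}$ in the last); your observation that $\operatorname{sign}F''$ is a fixed sign times $\operatorname{sign}(\rho-\mathrm{const})$ with $\rho(x)=(1+ax)^{-p-1}(1-x)^{c+1}$, whose logarithmic derivative has an affine numerator, yields the bound of at most two sign changes uniformly in $c$ and removes this case split entirely. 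Second, the paper treats the possibilities $f''_{p,c}(0)\le 0$ and $f''_{p,c}(0)>0$ separately (invoking concavity of $f_{p,c}$ in the former); you instead note that the hypothesis at $x=1$ already forces $a<1$ (resp.\ $a>1$) and hence $F''(0)>0$ (resp.\ $F''(0)<0$), which eliminates those subcases outright --- a genuine simplification the paper does not exploit. You also write out the $0<c<1$, $0<p<1$ regime explicitly, which the paper only declares to be ``similar.'' Your final bookkeeping (the patterns $+$, $+-$, $+-+$ of $\operatorname{sign}F''$ closed off by $F'(1)<0$ in the first regime, and the single $-$ to $+$ change closed off by $F''(x)\to+\infty$ in the second) is sound; the only cosmetic remark is that your pattern ``$+$'' is in fact vacuous, since $F''\ge 0$ throughout together with $F'(0)=0$ would force $F'(1)\ge 0$, contradicting $F'(1)<0$ --- but stating its (true) conclusion does no harm.
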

\begin{proof}
   As the proofs for both cases are similar, we only consider
   the case $c<0, p>1$ here. Let
\begin{align*}
   f_{p,c}(x)=\left (1+\frac {1-c}{p}x \right
   )^{1-p}-(1-x)^{1-c}-\frac {1-c}{p}x.
\end{align*}
    Note that $f_{p,c}(0)=0$ and we have
\begin{align*}
   f'_{p,c}(x) &=\frac {(1-c)(1-p)}{p}\left (1+\frac {1-c}{p}x \right
   )^{-p}+(1-c)(1-x)^{-c}-\frac {1-c}{p}, \\
   f''_{p,c}(x) &= \frac {(1-c)^2(p-1)}{p}\left (1+\frac {1-c}{p}x \right
   )^{-p-1}+(1-c)c(1-x)^{-c-1}.
\end{align*}
    It is easy to see that $f''_{p,c}(x)=0$ is equivalent to the
    equation $g_{p,c}(x)=0$ where
\begin{align*}
    g_{p,c}(x)=\left ( \frac {-pc}{(p-1)(1-c)}\right
    )^{-1/(p+1)}(1-x)^{(1+c)/(p+1)}-1-\frac {1-c}{p}x.
\end{align*}
    If $0>c> -1$, then it is easy to see that
    $f'''_{p,c}(x)< 0$ so that $f''_{p,c}(x)=0$ has at most one
    root in $(0,1)$. As $\lim_{x \rightarrow 1^-}f''_{p,c}(x)=-\infty$, it follows that if $f''_{p,c}(x) \leq 0$ for all
    $0 \leq x <1$, then $f_{p,c}(x)$ is concave down and the
    assertion of the lemma follows. Otherwise we have $f''_{p,c}(0)>0$ and this combined with the observation that
    $f'_{p,c}(0)=0, f'_{p,c}(1)<0$ implies that there
    exists an $x_0 \in [0,1]$ such that
    $f'_{p,c}(x) \geq 0$ for $0 \leq x \leq x_0$ and $f'_{p,c}(x)
    \leq 0$ for $x_0 \leq x \leq 1$ and the
    assertion of the lemma follows. The case $c=-1$ can be
    similarly discussed.

    If $c <-1$, then $g'_{p,c}(x)=0$ has at most one root in $(0,1)$ so that $f''_{p,c}(x)=0$ has at most
    two roots in $(0,1)$. If $f''_{p,c}(0)<0$, then as $f''_{p,c}(1)>0$, it follows that $f''_{p,c}(x)=0$ has exactly one root in
    $(0,1)$, and as $f'_{p,c}(0)=0, f'_{p,c}(1)<0$, it follows
    that $f'_{p,c}(x) < 0$ for all $x \in [0,1]$ and the
    assertion of the lemma follows. If $f''_{p,c}(0)>0$, then $f''_{p,c}(x)=0$ has either no root or two roots in
    $(0,1)$. If  $f''_{p,c}(x)=0$ has no root in
    $(0,1)$, then $f''_{p,c}(x) \geq 0$ for $x \in [0,1]$. As $f'_{p,c}(0)=0, f'_{p,c}(1) < 0$,
    we see that this is not possible. If $f''_{p,c}(x)=0$ has two roots in
    $(0,1)$, it follows that $f_{p,c}(x)$ is first
    increasing, then decreasing and then increasing again for $x
    \in [0,1]$ and it follows from $f'_{p,c}(1) < 0$ that there
    exists an $x'_0 \in [0,1]$ such that
    $f'_{p,c}(x) \geq 0$ for $0 \leq x \leq x'_0$ and $f'_{p,c}(x)
    \leq 0$ for $x'_0 \leq x \leq 1$ and the
    assertion of the lemma again follows. The case $f''_{p,c}(0)=0$
    can be discussed similarly as above and this completes the
    proof.
\end{proof}

   We now consider extending inequality \eqref{1.2} to $c<0$.
   For two fixed two positive sequences $\{ a_n \}, \{ b_n \}$, we
   recall that it is shown in \cite[Section 6]{G8} that we have
   the following inequality:
\begin{align}
\label{2.01}
    \frac {w^{p-1}_1}{b^{p}_1}\Big (\sum^{\infty}_{k=1}w_k\Big )^{1-p}a^p_1A^p_1+\sum_{n=2}^{N}
    \Big(\sum^{\infty}_{k=n}w_k \Big )^{-(p-1)}\Big( \frac {w_n^{p-1}}{b^p_n}-\frac {w_{n-1}^{p-1}}{b^p_{n-1}} \Big )
    a^p_n A_n^{p} \leq \sum_{n=1}^N
    x_n^{p},
\end{align}
    where $\{ w_n \}$ is a positive sequence, $N$ is a large integer and for $1 \leq n \leq N$, we set
    $S_n=\sum^{N}_{k=n}b_kx_k$ and $A_n=S_n/a_n$.

    We now recast inequality \eqref{1.2} as
\begin{align}
\label{2.02}
   \sum^{\infty}_{n=1}\left ( \lambda^{1/p}_n\Lambda^{-c/p}_n\sum^{\infty}_{k=n}\lambda^{1-1/p}_k \Lambda^{-(1-c/p)}_kx_k \right
   )^p \leq \left ( \frac {p}{1-c}\right )^p
   \sum^{\infty}_{n=1}x^p_n.
\end{align}

    It remains to establish inequality \eqref{2.02}. For this, it suffices to establish inequality \eqref{2.02} with
    the infinite sums replaced by finite sums from $1$ to $N$.
 We may also assume $\lambda_n>0$ for all $n$. We then set
\begin{align*}
   a_n=\lambda^{-1/p}_n\Lambda^{c/p}_n, \ b_n=\lambda^{1-1/p}_n
   \Lambda^{-(1-c/p)}_n
\end{align*}
    in inequality \eqref{2.01} to see that in order to establish inequality
    \eqref{2.02}, it suffices to find a positive sequence $\{
    w_n \}$ such that
\begin{align*}
   \Big( \sum^{\infty}_{k=n}w_k \Big )^{p-1} & \leq \left ( \frac {p}{1-c}\right )^p\lambda^{-1}_n\Lambda^{c}_n
   \Big( \frac {w_n^{p-1}\Lambda^{p-c}_n}{\lambda^{p-1}_n}-\frac {w_{n-1}^{p-1}\Lambda^{p-c}_{n-1}}{\lambda^{p-1}_{n-1}} \Big
   ),  \ n \geq 2;  \\
   \Big( \sum^{\infty}_{k=1}w_k \Big )^{p-1} & \leq \left ( \frac {p}{1-c}\right
   )^p\lambda^{-1}_1\Lambda^{c}_1
   \frac {w_1^{p-1}\Lambda^{p-c}_1}{\lambda^{p-1}_1}=\left ( \frac {p}{1-c}\right
   )^p
   \frac {w_1^{p-1}\Lambda^{p-1}_1}{\lambda^{p-1}_1}.
\end{align*}
   Upon a change of variables: $w_n \rightarrow \lambda_nw_n$, we
   can recast the above inequalities as
\begin{align}
\label{2.03}
   \left( \frac {1}{\Lambda_n}\sum^{\infty}_{k=n}\lambda_kw_k \right )^{p-1} & \leq \left ( \frac {p}{1-c}\right )^p\frac {\Lambda_n}{\lambda_n}
   \left( w_n^{p-1}-w_{n-1}^{p-1}\left (\frac {\Lambda_{n-1}}{\Lambda_{n}} \right )^{p-c}
   \right
   ),  \ n \geq 2;  \\
\label{2.04}
   \left( \frac {1}{\Lambda_1} \sum^{\infty}_{k=1}\lambda_kw_k \right )^{p-1} & \leq \left ( \frac {p}{1-c}\right
   )^pw_1^{p-1}.
\end{align}

   We now define the sequence $\{ w_n \}$ inductively by setting $w_1=1$ and for $n \geq 2$,
\begin{align*}
   \sum^{\infty}_{k=n}\lambda_kw_k= \frac
   {p}{1-c}\Lambda_{n-1}w_{n-1}.
\end{align*}
    This implies that for $n \geq 2$,
\begin{align*}
    w_{n}=\frac {\Lambda_{n-1}}{\Lambda_n}\left (1+\frac {1-c}{p}\frac  {\lambda_n}{\Lambda_n}\right )^{-1}w_{n-1}.
\end{align*}
    Using the above relations, we can simplify inequalities
    \eqref{2.03}, \eqref{2.04} to see that inequality \eqref{2.03} is equivalent to
    inequality \eqref{2.7} with $x=\lambda_n/\Lambda_n$ while inequality \eqref{2.04} is equivalent to
\begin{align}
\label{2.6}
    \left (1+\frac {1-c}{p} \right
   )^{1-p}-\frac {1-c}{p} \geq 0.
\end{align}
   It is easy to see that the above inequality is just the case $x=1$ of inequality \eqref{2.7}, we
   then conclude from Lemma \ref{lem1} that inequality \eqref{1.2} is valid for $c<0$ as
   long as the above inequality holds.


   Next, we consider extending inequality \eqref{1.3} to $c<0$. For two fixed two positive sequences $\{ a_n \}, \{ b_n \}$, we
recall that it is shown in \cite[(3.6)]{G7} (see also the
discussion in Section 5 of \cite{G9}) that in order for the
   following inequality
\begin{align*}
   \sum^{\infty}_{n=1}\left ( \sum^{n}_{k=1}a_nb_kx_k \right )^p
   \leq U_p \sum^{\infty}_{n=1}x^p_n.
\end{align*}
   to be valid for a given constant $U_p, p>1$, it suffices to find a positive sequence
   $\{w_n \}$ such that
\begin{align}
\label{2.1}
   \Big(\sum_{k=1}^nw_k \Big )^{p-1} \leq U_pa^p_n \Big ( \frac {w_n^{p-1}}{b^p_n}-\frac {w_{n+1}^{p-1}}{b^p_{n+1}} \Big
   ).
\end{align}

   Without loss of generality, we may assume $\lambda_n>0$ for all $n$. By a change of variables, we recast inequality \eqref{1.3} as
\begin{align*}
   \sum^{\infty}_{n=1}\left (\lambda^{1/p}_n{\Lambda^*}^{-c/p}_n \sum^{n}_{k=1}\lambda^{1-1/p}_k{\Lambda^*}^{-(1-c/p)}_k x_k \right
   )^p \leq \left ( \frac {p}{1-c}\right )^p
   \sum^{\infty}_{n=1}x^p_n.
\end{align*}
   It follows from \eqref{2.1} that in order to establish the above inequality, it suffices
   to
   find a positive sequence $\{w_n \}$ such that
\begin{align*}
   \Big(\sum_{k=1}^nw_k \Big )^{p-1} \leq \left ( \frac {p}{1-c}\right )^p\frac {{\Lambda^*}^c_n}{\lambda_n}
   \Big ( \frac {w_n^{p-1}{\Lambda^*}^{p-c}_n}{\lambda^{p-1}_n}-\frac {w_{n+1}^{p-1}{\Lambda^*}^{p-c}_{n+1}}{\lambda^{p-1}_{n+1}} \Big
   ).
\end{align*}
   By a change of variables: $w_n \mapsto \lambda_n w_n$, we can
   recast the above inequality as
\begin{align}
\label{2.2}
   \left(\frac {1}{\Lambda^*_n}\sum_{k=1}^n\lambda_kw_k \right )^{p-1} \leq \left ( \frac {p}{1-c}\right )^p\frac {\Lambda^*_n}{\lambda_n}
   \left ( w_n^{p-1}-w_{n+1}^{p-1}\left (\frac {\Lambda^*_{n+1}}{\Lambda^*_{n}} \right )^{p-c}
   \right
   ).
\end{align}
   We now define the sequence  $\{w_n \}$  inductively by setting
   $w_1=1$ and for $n \geq 1$,
\begin{align*}
   \sum_{k=1}^n\lambda_kw_k=\frac
   {p}{1-c}\Lambda^*_{n+1}w_{n+1}.
\end{align*}
    This implies that for $n \geq 2$,
\begin{align*}
    w_{n}=\frac {\Lambda^*_{n+1}}{\Lambda^*_n}\left (1+\frac {1-c}{p}\frac  {\lambda_n}{\Lambda^*_n}\right )^{-1}w_{n+1}.
\end{align*}
    Using the above relations, we can simplify inequality
    \eqref{2.2} to see that the $n \geq 2$ cases are equivalent to
    inequality \eqref{2.7} with $x=\lambda_n/\Lambda^*_n$.
   It is also easy to see that the $n=1$ case of \eqref{2.2}
   corresponds to the following inequality:
\begin{align*}
   \frac {1-c}{p}x  \leq \left (\frac {1-c}{p}x \right
   )^{1-p}-(1-x)^{1-c}.
\end{align*}
   It is easy to see that the above inequality is implied by inequality \eqref{2.7}, we
   then conclude from Lemma \ref{lem1} that inequality \eqref{1.3} holds for $c<0$ as long as inequality \eqref{2.6} holds.


    Note that for fixed $p>0$, the function $(1+x)^{1-p}-x$ is a
 decreasing function of $x$. Moreover, it is easy to see that
    inequality \eqref{2.6} (resp. its reverse) always holds with $c=0$ when $p>1$ (resp. when $0<p<1$).
    We note that our discussions above for inequality \eqref{1.2} can be carried out for the case $0<p<1, 0<c<1$ with the related inequalities
    reversed. We therefore obtain the following
\begin{theorem}
\label{thm1} Let $p>0$ be fixed. Let $c_0$ denote the unique
number satisfying
\begin{align*}
    \left (1+\frac {1-c_0}{p} \right
   )^{1-p}-\frac {1-c_0}{p} =0.
\end{align*}
   Then inequalities \eqref{1.2} and \eqref{1.3} hold for all
   $c_0 \leq c <1$ when $p>1$ and the reversed inequality \eqref{1.2} holds for all
   $c <c_0$ when $0<p<1$.
\end{theorem}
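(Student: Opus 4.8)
The plan is to obtain the theorem as a direct consequence of the reductions already carried out. The discussion preceding the statement shows that, when $p>1$, inequalities \eqref{1.2} and \eqref{1.3} hold whenever the scalar inequality \eqref{2.6} holds, and that, when $0<p<1$, the reverse of \eqref{1.2} holds whenever the reverse of \eqref{2.6} holds. So the remaining task is only to locate $c_0$ and to decide for which $c$ inequality \eqref{2.6}, respectively its reverse, is true.

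First I would record the shape of the relevant scalar function. Set $g(x)=(1+x)^{1-p}-x$, which for fixed $p>0$ is decreasing on $[0,\infty)$ (as already remarked), with $g(0)=1$ and $g(x)\to-\infty$ as $x\to\infty$, and put $\phi(c)=g((1-c)/p)$. Since $(1-c)/p$ decreases in $c$, $\phi$ is increasing on $(-\infty,1]$, with $\phi(1)=g(0)=1>0$ and $\phi(c)\to-\infty$ as $c\to-\infty$; hence $\phi$ has a unique zero $c_0<1$, and one checks that $\phi$ remains positive for $c\ge 1$, so $c_0$ is indeed the unique number described in the statement. Note that inequality \eqref{2.6} is precisely the assertion $\phi(c)\ge 0$.

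Next I would split on the sign of $\phi(0)$, which by the remark preceding the theorem is positive when $p>1$ and negative when $0<p<1$. If $p>1$, then $c_0<0$, and monotonicity of $\phi$ gives $\phi(c)\ge 0$ — that is, \eqref{2.6} — for every $c\ge c_0$; combining the classical range $0\le c<1$ of inequalities \eqref{1.2} and \eqref{1.3} with the range $c_0\le c<0$ supplied by Lemma \ref{lem1} and the inductively constructed weights $\{w_n\}$ yields \eqref{1.2} and \eqref{1.3} for all $c_0\le c<1$. If $0<p<1$, then $c_0\in(0,1)$, and monotonicity of $\phi$ gives the reverse of \eqref{2.6} for every $c\le c_0$, in particular for $c<c_0$; since the classical reverse of \eqref{1.2} covers $c\le 0$ while the reversed form of the preceding argument (valid for $0<p<1$, $0<c<1$) covers $0<c<c_0$, the reverse of \eqref{1.2} holds for all $c<c_0$.

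Since the analytic core — the convexity and monotonicity analysis in Lemma \ref{lem1} and the construction of the weight sequence $\{w_n\}$ — is already in place, there is no genuine obstacle remaining in the proof of the theorem itself. The only point requiring care is bookkeeping: tracking the direction of every inequality when passing between the cases $p>1$ and $0<p<1$, and checking that the classical ranges ($0\le c<1$ for \eqref{1.2} and \eqref{1.3}, and $c\le 0$ for the reverse of \eqref{1.2}) together with the newly obtained ranges cover exactly the intervals asserted.
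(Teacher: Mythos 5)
Your proposal is correct and follows essentially the same route as the paper: the theorem is obtained by combining the reductions to the scalar condition \eqref{2.6} (via the weight sequences and Lemma \ref{lem1}) with the monotonicity in $c$ of $(1+\tfrac{1-c}{p})^{1-p}-\tfrac{1-c}{p}$ and the sign of that expression at $c=0$, exactly as in the paragraph preceding the theorem. Your version merely makes the bookkeeping (uniqueness of $c_0$, the case split on the sign at $c=0$, and the gluing with the classical ranges) more explicit than the paper does.
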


   We leave it to the reader for the corresponding extensions to
   $c>p$ of inequalities \eqref{1.1} and \eqref{1.4} by the
   duality principle.

\section{Some related results}
\label{sec 3} \setcounter{equation}{0}
    In this section we first consider the conjecture of Bennett and Grosse-Erdmann on inequality \eqref{1.5} for the case
    $0<\alpha<1$. We may assume $\lambda_n>0$ for all $n$. We note here that it is shown in \cite[(153), (156)]{BGE1} that it
    suffices to show that
 \begin{align*}
     \sum^{\infty}_{n=1}\lambda_n \left( \sum^{\infty}_{k=n}
     \left (\Lambda^{\alpha}_k-\Lambda^{\alpha}_{k-1} \right )x_k \right )^p
     \leq (\alpha
     p)^p\sum^{\infty}_{n=1}\lambda_n(\Lambda^{\alpha}_nx_n)^p,
 \end{align*}
   where we set $\Lambda_0=0$. By the duality principle, it is
   easy to see that the above inequality is equivalent to
 \begin{align*}
   \sum^{\infty}_{n=1}\left( \frac {\Lambda^{\alpha}_n-\Lambda^{\alpha}_{n-1}}{\lambda^{1-1/p}_n\Lambda^{\alpha}_n}\sum^{n}_{k=1}
   \lambda^{1-1/p}_kx_k  \right )^p
     \leq \left( \frac {\alpha p}{p-1} \right
     )^p\sum^{\infty}_{n=1}x^p_n.
 \end{align*}

    It follows from \eqref{2.1} that in order to establish the above inequality, it suffices
   to find a positive sequence $\{w_n \}$ such that
\begin{align*}
   \Big(\sum_{k=1}^nw_k \Big )^{p-1} \leq \left ( \frac {\alpha p}{p-1}\right )^p
   \left( \frac
   {\Lambda^{\alpha}_n-\Lambda^{\alpha}_{n-1}}{\lambda^{1-1/p}_n\Lambda^{\alpha}_n}\right
   )^{-p}
   \Big ( \frac {w_n^{p-1}}{\lambda^{p-1}_n}-\frac {w_{n+1}^{p-1}}{\lambda^{p-1}_{n+1}} \Big
   ).
\end{align*}
   By a change of variables: $w_n \mapsto \lambda_n w_n$, we can
   recast the above inequality as
\begin{align}
\label{3.1}
   \left(\frac {1}{\Lambda_n}\sum_{k=1}^n\lambda_kw_k \right )^{p-1} \leq \left ( \frac {p}{p-1}\right
   )^p
   \left( \frac
   {\alpha \lambda_n\Lambda^{\alpha-1}_n}{\Lambda^{\alpha}_n-\Lambda^{\alpha}_{n-1}}\right
   )^{p}\frac {\Lambda_n}{\lambda_n}
   \left ( w_n^{p-1}-w_{n+1}^{p-1}
   \right
   ).
\end{align}
   We now define the sequence $\{w_n \}$ inductively by setting
   $w_1=1$ and for $n \geq 1$,
\begin{align*}
    \sum_{k=1}^n\lambda_kw_k=\frac
   {p}{p-1}\Lambda_nw_{n+1}.
\end{align*}
    This implies that
\begin{align*}
    w_{n+1}=\left (1-\frac {1}{p} \frac {\lambda_n}{\Lambda_n}\right ) w_n.
\end{align*}
    Using the above relations, we can simplify inequality
    \eqref{3.1} to see that it is equivalent to the following:
\begin{align}
\label{3.8}
   \left (\frac {p}{p-1} \right )\left (\left(1-\frac {x}{p} \right )^{1-p}-1 \right )
   \geq x\left ( \frac {1-(1-x)^{\alpha}}{\alpha x} \right )^{p},
\end{align}
   where we set $x=\lambda_n/\Lambda_n$ so that $0 \leq x \leq 1$.

   By Hadamard's inequality, which asserts for a continuous convex function $h(u)$ on $[a, b]$,
\begin{equation*}
    \frac {1}{b-a}\int^b_ah(u)du \geq h(\frac {a+b}{2}),
\end{equation*}
   we see that
\begin{align*}
   \left (\frac {p/x}{p-1} \right )\left (\left(1-\frac {x}{p} \right )^{1-p}-1 \right )
   =\frac {1}{1-(1-x/p)} \int^1_{1-x/p}u^{-p}du \geq \left
   (1-\frac {x}{2p}
   \right )^{-p} .
\end{align*}

   Thus, it remains to show that
\begin{align*}
   \left (1-\frac {x}{2p}
   \right )^{-1}
   \geq \frac {1-(1-x)^{\alpha}}{\alpha x},
\end{align*}
   Equivalently, we need to show $f_{\alpha,p}(x) \geq 0$ where
\begin{align*}
   f_{\alpha,p}(x)=\alpha x-\left (1-\frac {x}{2p} \right )\left (1-(1-x)^{\alpha} \right ).
\end{align*}
   It's easy to see that $f_{\alpha, p}(0)=f'_{\alpha,p}(0)=0$ and $f''_{\alpha,p}(x)$ has a most one root
   in $(0,1)$. It follows that $f'_{\alpha,p}(x)$ has a most one root
   in $(0,1)$. Suppose $\alpha>1-1/p$ so that $f''_{\alpha, p}(0) >0$. This together with the observation that $\lim_{x \rightarrow
   1^-}f'_{\alpha,p}(x)=-\infty$ implies that in order for
   $f_{\alpha,p}(x) \geq 0$ for all $x \in [0,1]$, it suffices to
   have $f_{\alpha,p}(1) \geq 0$. We then deduce that we need to
   have
\begin{align*}
   \alpha \geq 1-\frac {1}{2p}.
\end{align*}
   We then obtain the following
\begin{theorem}
\label{thm2} Inequality \eqref{1.5} is valid for $p>1, \alpha \geq
1-\frac {1}{2p}$.
\end{theorem}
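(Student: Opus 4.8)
The plan is to follow exactly the reduction scheme already set up in the excerpt and carry it through to the inequality \eqref{3.8}, then to bound the left-hand side from below via Hadamard's inequality and reduce everything to the single-variable inequality $f_{\alpha,p}(x) \geq 0$ on $[0,1]$, where
\begin{align*}
   f_{\alpha,p}(x)=\alpha x-\left (1-\frac {x}{2p} \right )\left (1-(1-x)^{\alpha} \right ).
\end{align*}
First I would recall from \cite[(153), (156)]{BGE1} that establishing \eqref{1.5} for $0<\alpha<1$, $p>1$ reduces to the displayed inequality with constant $(\alpha p)^p$, and then apply the duality principle to pass to the equivalent form with constant $(\alpha p/(p-1))^p$. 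Invoking \eqref{2.1} with the choice $a_n=(\Lambda^\alpha_n-\Lambda^\alpha_{n-1})/(\lambda^{1-1/p}_n\Lambda^\alpha_n)$ and $b_n=\lambda^{1-1/p}_n$, followed by the substitution $w_n\mapsto\lambda_nw_n$, recasts the problem as finding a positive sequence $\{w_n\}$ satisfying \eqref{3.1}. Defining $w_1=1$ and $\sum_{k=1}^n\lambda_kw_k=\frac{p}{p-1}\Lambda_nw_{n+1}$ gives the recursion $w_{n+1}=(1-\frac1p\frac{\lambda_n}{\Lambda_n})w_n$, and substituting this back into \eqref{3.1}, with $x=\lambda_n/\Lambda_n\in(0,1]$, collapses it to \eqref{3.8}.

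Next I would handle \eqref{3.8} analytically. Writing the left-hand side as $\frac{1}{x}\int_{1-x/p}^1 u^{-p}\,du$ over an interval of length $x/p$ and applying Hadamard's inequality to the convex function $u\mapsto u^{-p}$ gives the lower bound $(1-\frac{x}{2p})^{-p}$; hence it suffices to prove $(1-\frac{x}{2p})^{-1}\geq(1-(1-x)^\alpha)/(\alpha x)$, i.e.\ $f_{\alpha,p}(x)\geq 0$ for all $x\in[0,1]$. I would then analyze $f_{\alpha,p}$ by differentiation: one checks $f_{\alpha,p}(0)=f'_{\alpha,p}(0)=0$, and that $f''_{\alpha,p}(x)$ is a sum of a linear term and a term of the form $(\text{const})(1-x)^{\alpha-2}$, from which $f''_{\alpha,p}$ has at most one zero in $(0,1)$; consequently $f'_{\alpha,p}$ has at most one zero there. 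Under the hypothesis $\alpha\geq 1-\frac{1}{2p}$ (which in particular gives $\alpha>1-1/p$, hence $f''_{\alpha,p}(0)>0$), together with $f'_{\alpha,p}(x)\to-\infty$ as $x\to 1^-$, the function $f'_{\alpha,p}$ is first positive then negative, so $f_{\alpha,p}$ increases then decreases on $[0,1]$; therefore $f_{\alpha,p}(x)\geq\min(f_{\alpha,p}(0),f_{\alpha,p}(1))$, and it remains only to verify $f_{\alpha,p}(1)\geq 0$. Evaluating, $f_{\alpha,p}(1)=\alpha-(1-\frac{1}{2p})\geq 0$ precisely under the stated condition, which closes the argument.

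The main obstacle is the sign analysis of $f''_{\alpha,p}$ and the counting of zeros of $f'_{\alpha,p}$ in $(0,1)$: one must be careful that the "at most one root" claim for $f''_{\alpha,p}$ genuinely holds across the full range $0<\alpha<1$ (the term $(1-x)^{\alpha-2}$ is monotone in $x$ since $\alpha-2<0$, so $f''_{\alpha,p}$ is monotone minus a linear function, but one should confirm the resulting difference is monotone or at worst changes sign once). A secondary delicate point is that the Hadamard step discards some slack, so one should check the reduction to $f_{\alpha,p}\geq 0$ does not cost more than the assumed $\alpha\geq 1-\frac{1}{2p}$; here the endpoint computation $f_{\alpha,p}(1)=\alpha-1+\frac{1}{2p}$ shows the threshold is exactly $1-\frac{1}{2p}$, so the argument is tight at $x=1$ and no further loss occurs. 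Collecting these observations yields Theorem \ref{thm2}. \QED
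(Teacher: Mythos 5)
Your proposal follows the paper's proof essentially step for step: the same reduction via \cite[(153), (156)]{BGE1} and duality, the same choice of $a_n, b_n$ in \eqref{2.1}, the same inductive definition of $\{w_n\}$ leading to \eqref{3.8}, the same Hadamard lower bound $(1-\frac{x}{2p})^{-p}$, and the same endpoint analysis of $f_{\alpha,p}$ yielding the threshold $\alpha \geq 1-\frac{1}{2p}$. The argument is correct (indeed your factorization remark $f''_{\alpha,p}(x)=\alpha(1-x)^{\alpha-2}\left[\frac{1}{p}(1-x)+(\alpha-1)\left(1-\frac{x}{2p}\right)\right]$ makes the "at most one root" claim more explicit than the paper does), so there is nothing further to add.
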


   We now consider the following analogue to inequality
   \eqref{1.5}:
\begin{align}
\label{3.2}
    \sum^{\infty}_{n=1}\lambda_n \left( \sum^{n}_{k=1}
     {\Lambda^*}^{\alpha}_kx_k \right )^p
     \leq (\alpha
     p+1)^p\sum^{\infty}_{n=1}\lambda_n{\Lambda^*}^{\alpha p}_n\left (\sum^{n}_{k=1}x_n \right
     )^p.
\end{align}

    Again we may assume $\lambda_n>0$ for all $n$. We set
\begin{align*}
    y_n=\sum^n_{k=1}x_k
\end{align*}
    to recast inequality \eqref{3.2} as
\begin{align*}
    \sum^{\infty}_{n=1}\lambda_n \left( \sum^{n-1}_{k=1}
     \left ({\Lambda^*}^{\alpha}_k-{\Lambda^*}^{\alpha}_{k+1} \right )y_k+ {\Lambda^*}^{\alpha}_{n} y_n \right )^p
     \leq (\alpha
     p+1)^p\sum^{\infty}_{n=1}\lambda_n{\Lambda^*}^{\alpha p}_ny^p_n.
\end{align*}
    By Minkowski's inequality, we have
\begin{align*}
   & \left ( \sum^{\infty}_{n=1}\lambda_n \left( \sum^{n-1}_{k=1}
     \left ({\Lambda^*}^{\alpha}_k-{\Lambda^*}^{\alpha}_{k+1} \right )y_k+ {\Lambda^*}^{\alpha}_{n} y_n \right
     )^p \right )^{\frac {1}{p}} \\
   \leq & \left ( \sum^{\infty}_{n=1}\lambda_n \left( \sum^{n-1}_{k=1}
     \left ({\Lambda^*}^{\alpha}_k-{\Lambda^*}^{\alpha}_{k+1} \right )y_k \right
     )^p \right )^{\frac {1}{p}}+\left ( \sum^{\infty}_{n=1}\lambda_n \left( {\Lambda^*}^{\alpha}_{n} y_n \right
     )^p \right )^{\frac {1}{p}}.
\end{align*}
    Thus, it suffices to show that
\begin{align}
\label{3.4}
    \sum^{\infty}_{n=1}\lambda_n \left( \sum^{n}_{k=1}
     \left ({\Lambda^*}^{\alpha}_k-{\Lambda^*}^{\alpha}_{k+1} \right )y_k \right
     )^p \leq ( \alpha
     p)^p\sum^{\infty}_{n=1}\lambda_n{\Lambda^*}^{\alpha p}_ny^p_n.
\end{align}

    When $0 < \alpha \leq 1$, we note that we have
\begin{align*}
    \sum^{\infty}_{n=1}\lambda_n \left( \sum^{n}_{k=1}
     \left ({\Lambda^*}^{\alpha}_k-{\Lambda^*}^{\alpha}_{k+1} \right )y_k \right
     )^p \leq \sum^{\infty}_{n=1}\lambda_n \left( \sum^{n}_{k=1}
     \left (\alpha \lambda_k {\Lambda^*}^{\alpha-1}_k\right )y_k \right
     )^p.
\end{align*}
    It then follows from inequality \eqref{1.3} with $c=0,
    x_k={\Lambda^*}^{\alpha-1}_k y_k$ that
\begin{align*}
    \sum^{\infty}_{n=1}\lambda_n \left( \sum^{n}_{k=1}
     \lambda_k{\Lambda^*}^{\alpha-1}_ky_k \right
     )^p \leq p^p\sum^{\infty}_{n=1}\lambda_n{\Lambda^*}^{\alpha p}_ny^p_n.
\end{align*}
     Thus, inequality \eqref{3.4} is valid when $0<\alpha \leq 1$.

     We now consider the case $\alpha \geq 1$. By the duality principle, it is
   easy to see that inequality \eqref{3.4} is equivalent to
 \begin{align}
 \label{3.5}
   \sum^{\infty}_{n=1}\left( \frac {{\Lambda^*}^{\alpha}_n-{\Lambda^*}^{\alpha}_{n+1}}{\lambda^{1-1/p}_n{\Lambda^*}^{\alpha}_n}\sum^{\infty}_{k=n}
   \lambda^{1-1/p}_ky_k  \right )^p
     \leq \left( \frac {\alpha p}{p-1} \right
     )^p\sum^{\infty}_{n=1}y^p_n.
 \end{align}

    We then see that upon setting
\begin{align*}
   a_n=\frac {\lambda^{1-1/p}_n{\Lambda^*}^{\alpha}_n}{{\Lambda^*}^{\alpha}_n-{\Lambda^*}^{\alpha}_{n+1}}, \ b_n=\lambda^{1-1/p}_n
\end{align*}
    in inequality \eqref{2.01} that one can establish inequality
    \eqref{3.5} as long as one can find a positive sequence $\{
    w_n \}$ such that
\begin{align*}
   \Big( \sum^{\infty}_{k=n}w_k \Big )^{p-1} & \leq \left ( \frac {\alpha p}{p-1}\right )^p
   \left ( {\Lambda^*}^{\alpha}_n-{\Lambda^*}^{\alpha}_{n+1} \right )^{-p}\lambda^{p-1}_n{\Lambda^*}^{\alpha p}_n
   \Big( \frac {w_n^{p-1}}{\lambda^{p-1}_n}-\frac {w_{n-1}^{p-1}}{\lambda^{p-1}_{n-1}} \Big
   ),  \ n \geq 2;  \\
   \Big( \sum^{\infty}_{k=1}w_k \Big )^{p-1} & \leq \left ( \frac {\alpha p}{p-1}\right
   )^p\left ( {\Lambda^*}^{\alpha}_1-{\Lambda^*}^{\alpha}_{2} \right )^{-p}\lambda^{p-1}_1{\Lambda^*}^{\alpha
   p}_1
   \frac {w_1^{p-1}}{\lambda^{p-1}_1}.
\end{align*}
   Upon a change of variables: $w_n \rightarrow \lambda_nw_n$, we
   can recast the above inequalities as
\begin{align}
\label{3.6}
   \Big( \frac {1}{\Lambda^*_n}\sum^{\infty}_{k=n}\lambda_kw_k \Big )^{p-1} & \leq \left ( \frac {p}{p-1}\right )^p
   \left ( \frac {\alpha \lambda_n {\Lambda^*}^{\alpha-1}_n}{{\Lambda^*}^{\alpha}_n-{\Lambda^*}^{\alpha}_{n+1}} \right )^{p}
   \frac {{\Lambda^*}_n}{\lambda_n}
   \Big( w_n^{p-1}-w_{n-1}^{p-1}\Big
   ),  \ n \geq 2;  \\
\label{3.7}
   \Big( \frac {1}{\Lambda^*_1}\sum^{\infty}_{k=1}\lambda_k w_k \Big )^{p-1} & \leq \left ( \frac {p}{p-1}\right
   )^p\left ( \frac {\alpha \lambda_1 {\Lambda^*}^{\alpha-1}_1}{{\Lambda^*}^{\alpha}_1-{\Lambda^*}^{\alpha}_{2}} \right )^{p}
   \frac {{\Lambda^*}_1}{\lambda_1}
   w_1^{p-1}.
\end{align}

   We now define the sequence $\{w_n \}$ inductively by setting
   $w_1=1$ and for $n \geq 2$,
\begin{align*}
  \sum_{k=n}^{\infty}\lambda_kw_k=\frac
   {p}{p-1} \Lambda^*_{n}w_{n-1}.
\end{align*}
    This implies that for $n \geq 2$,
\begin{align*}
    w_{n}=\left (1-\frac {1}{p} \frac {\lambda_n}{\Lambda^*_n}\right )^{-1} w_{n-1}.
\end{align*}
    Using the above relations, we can simplify inequality
    \eqref{3.6} to see that it is equivalent to inequality
    \eqref{3.8} with $x=\lambda_n/\Lambda^*_n$ while inequality
    \eqref{3.7} is equivalent to the following inequality
\begin{align*}
   \left (\frac {p}{p-1} \right )\left (1-\frac {x}{p}  \right )^{1-p}
   \geq x\left ( \frac {1-(1-x)^{\alpha}}{\alpha x} \right )^{p}.
\end{align*}
   As  the above inequality is implied by inequality \eqref{3.8}, it suffices to establish inequality \eqref{3.8} for all $\alpha \geq 1$.
   For this, we note that it is easy to show that the right-hand side expression of \eqref{3.8} is a decreasing function of $\alpha$ and
    inequality \eqref{3.8} is valid when $\alpha=1$. It therefore
    follows that inequality \eqref{3.8} is valid for all $\alpha
    \geq 1$. As it is easy to check that the constant in \eqref{3.2} is best possible by considering
    $\lambda_n=n^{-a}, a>1, x_k=n^b, b=((a-1)(\alpha
    p+1)-\epsilon)/p-1$ with $\epsilon \rightarrow 0^+$,
    we conclude the paper with the following
\begin{theorem}
\label{thm3} Inequality \eqref{3.2} is valid for $p>1, \alpha >0$.
The constant is best possible.
\end{theorem}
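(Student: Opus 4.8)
The plan is to follow exactly the scheme already laid out in the excerpt for inequality \eqref{3.2}, assembling the pieces that have been established along the way. The first reduction is to set $y_n=\sum_{k=1}^n x_k$, recast \eqref{3.2} in terms of $(y_n)$, and apply Minkowski's inequality to split the resulting sum into the term $\big(\sum_n \lambda_n ({\Lambda^*}^\alpha_n y_n)^p\big)^{1/p}$, which already has the right shape, plus the term controlled by \eqref{3.4}. Thus it suffices to prove \eqref{3.4} for all $\alpha>0$. Here I would split into the two ranges $0<\alpha\le 1$ and $\alpha\ge 1$ precisely as in the text: for $0<\alpha\le 1$ one uses the elementary bound ${\Lambda^*}^\alpha_k-{\Lambda^*}^\alpha_{k+1}\le \alpha\lambda_k{\Lambda^*}^{\alpha-1}_k$ (concavity of $t\mapsto t^\alpha$) and then invokes inequality \eqref{1.3} with $c=0$ and $x_k={\Lambda^*}^{\alpha-1}_k y_k$, whose validity for $c=0$, $p>1$ is known; for $\alpha\ge 1$ one dualizes \eqref{3.4} into \eqref{3.5}, inserts $a_n,b_n$ into \eqref{2.01}, changes variables $w_n\mapsto\lambda_nw_n$, and arrives at the pair \eqref{3.6}, \eqref{3.7}.

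The second stage is to choose the auxiliary sequence. Setting $w_1=1$ and $\sum_{k=n}^\infty\lambda_kw_k=\frac{p}{p-1}\Lambda^*_nw_{n-1}$ for $n\ge2$ gives the recursion $w_n=(1-\tfrac1p\tfrac{\lambda_n}{\Lambda^*_n})^{-1}w_{n-1}$, and with this choice \eqref{3.6} collapses to inequality \eqref{3.8} with $x=\lambda_n/\Lambda^*_n\in[0,1]$, while \eqref{3.7} becomes the weaker inequality $\frac{p}{p-1}(1-\tfrac xp)^{1-p}\ge x\big(\tfrac{1-(1-x)^\alpha}{\alpha x}\big)^p$, which is implied by \eqref{3.8} since its left side exceeds that of \eqref{3.8} by $\frac{p}{p-1}$. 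So everything reduces to establishing \eqref{3.8} for all $\alpha\ge1$ and all $x\in[0,1]$. For this I would observe that the right-hand side of \eqref{3.8}, namely $x\big(\tfrac{1-(1-x)^\alpha}{\alpha x}\big)^p$, is a decreasing function of $\alpha$ (the factor $\tfrac{1-(1-x)^\alpha}{\alpha}$ decreases in $\alpha$ for fixed $x\in[0,1]$), so it suffices to check \eqref{3.8} at $\alpha=1$, where the right side is simply $x$ and the inequality reads $\frac{p}{p-1}\big((1-\tfrac xp)^{1-p}-1\big)\ge x$; this is exactly the $\alpha=1$, $c$-free instance already handled in the analysis of Theorem \ref{thm2} (indeed it follows from Hadamard's inequality, or directly from convexity of $u\mapsto u^{-p}$).

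The final, routine stage is the optimality of the constant $(\alpha p+1)^p$. I would take $\lambda_n=n^{-a}$ with $a>1$ and $x_k=k^b$ with $b=((a-1)(\alpha p+1)-\epsilon)/p-1$, estimate $\Lambda^*_n\sim \frac{1}{a-1}n^{1-a}$ and the relevant power sums by comparison with integrals, and let $\epsilon\to0^+$ to see that the ratio of the two sides of \eqref{3.2} tends to $(\alpha p+1)^p$; since we must have $b>-1$ for the sums to behave as claimed, the construction is valid for $\epsilon$ small. I expect the only genuine obstacle to be the monotonicity claim in $\alpha$ for the right-hand side of \eqref{3.8}: one must verify carefully that $\alpha\mapsto \alpha^{-1}(1-(1-x)^\alpha)$ is nonincreasing on $[1,\infty)$ for each fixed $x\in[0,1]$, which amounts to checking that $\frac{\partial}{\partial\alpha}\big(\alpha^{-1}(1-(1-x)^\alpha)\big)\le0$, i.e. that $1-(1-x)^\alpha+\alpha(1-x)^\alpha\log(1-x)\ge0$; writing $t=(1-x)^\alpha\in(0,1]$ and $s=-\alpha\log(1-x)\ge0$ this is the elementary bound $1-t-ts\ge 1-t-(1-t)=\ldots$, more precisely $1-t(1+s)\ge 0$ fails in general, so the correct route is to note $1-(1-x)^\alpha=\int_0^{\alpha}(1-x)^u(-\log(1-x))\,du\ge \alpha(1-x)^\alpha(-\log(1-x))$ by monotonicity of the integrand, which gives the claim directly. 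Everything else is bookkeeping already performed in the excerpt.
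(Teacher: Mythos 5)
Your proposal is correct and follows essentially the same route as the paper: the Minkowski reduction to \eqref{3.4}, the case split at $\alpha=1$ using \eqref{1.3} with $c=0$ for $0<\alpha\le 1$ and the auxiliary-sequence machinery \eqref{2.01}, \eqref{3.6}--\eqref{3.8} for $\alpha\ge 1$, the monotonicity in $\alpha$ of the right-hand side of \eqref{3.8}, and the same test sequences for sharpness. Your explicit verification that $\alpha\mapsto\alpha^{-1}(1-(1-x)^{\alpha})$ is nonincreasing, via $1-(1-x)^{\alpha}=\int_0^{\alpha}(1-x)^u(-\log(1-x))\,du\ge\alpha(1-x)^{\alpha}(-\log(1-x))$, correctly supplies a detail the paper only asserts is easy.
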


\end{document}